\documentclass[preprint,12pt]{elsarticle}
\usepackage{epsfig}
\usepackage{algorithmicx}
\usepackage[ruled]{algorithm}
\usepackage{algpseudocode}
\usepackage{amsmath}
\usepackage{amssymb}
\usepackage{subfig}
\usepackage{xspace}
\usepackage{tabularx}
\usepackage{url}
\usepackage{multirow}
\usepackage{amsthm}
\newtheorem{theorem}{\bf Theorem}[section]

\newproof{pf}{Proof}

\newtheorem{example}{\bf Example}[section]

\begin{document}

\begin{frontmatter}



\title{A Systematic Framework for Stable and Cost-Efficient Matrix Polynomial Evaluation}


\author[I3M]{J.M. Alonso}
\ead{jmalonso@dsic.upv.es}
\author[ITEAM]{J. Sastre}
\ead{jsastrem@upv.es}
\author[IMM]{J. Ib\'a\~nez}
\ead{jjibanez@dsic.upv.es}
\author[IMM]{E. Defez}
\ead{edefez@imm.upv.es}

\address[I3M]{Instituto Universitario de Instrumentaci\'on para Imagen Molecular,
Universitat Polit\`ecnica de Val\`encia, Camino de Vera s/n,
46022-Valencia (Spain)}
\address[ITEAM]{Instituto de Telecomunicaciones y Aplicaciones Multimedia,
Universitat Polit\`ecnica de Val\`encia, Camino de Vera s/n, 46022-Valencia (Spain)}
\address[IMM]{Instituto Universitario de Matemática Multidisciplinar,
Universitat Polit\`ecnica de Val\`encia, Camino de Vera s/n, 46022-Valencia (Spain)}

\begin{abstract}
A method for evaluating matrix polynomials have recently been
developed that require one fewer matrix product ($1M$) than the
Paterson--Stockmeyer (PS) method. Since the computational cost for
large-scale matrices is asymptotically determined by the number of
matrix products, this reduction directly affects the total execution
time. However, the coefficients in these optimized formulas emerge
as solutions to systems of nonlinear polynomial equations, resulting
in multiple potential solution sets. An inappropriate selection of
these coefficients can lead to numerical instability in
floating-point arithmetic.

This paper presents a systematic framework and a MATLAB
implementation, \texttt{MatrixPolEval1}, used to obtain and validate
stable coefficient sets for matrix polynomials of degrees $m \in
\{8, 10, 12\}$ and above. The framework introduces structural
variants to maintain stability even when the original configuration
fails to yield a robust solution. The provided tool identifies
stable coefficient sets using variable precision arithmetic (VPA)
and provides a reliability indicator for expected accuracy.
Numerical experiments on polynomials arising in applications,
including the matrix exponential and geometric series, show that the
framework achieves the $1M$ saving while maintaining numerical
accuracy comparable to the PS method.
\end{abstract}

\begin{keyword}
Matrix polynomial evaluation \sep Paterson--Stockmeyer method \sep
Numerical stability \sep Rounding error analysis \sep Computational
efficiency \sep MATLAB framework

\MSC[2020] 15A16 \sep 65F30 \sep 65G50 \sep 65Y15
\end{keyword}

\end{frontmatter}

\section{Introduction}\label{section_introduction}

The evaluation of a matrix polynomial
\begin{equation}\label{P}
    P_{m}(A)=\sum_{i=0}^{m} b_{i} A^{i}
\end{equation}
can be performed with reduced computational cost using the schemes
proposed in \cite{Sastre18}, which utilize products and linear
combinations of matrix polynomials. Specifically, \cite[Prop.
1]{Sastre18} shows that for general matrix polynomials of certain
degrees with non-zero leading coefficients, it is possible to reduce
the computational cost by one matrix product ($1M$) compared to the
Paterson--Stockmeyer (PS) method \cite{PaSt73}. In this context, the
number of matrix--matrix multiplications is used as the primary cost
metric, as other operations become asymptotically negligible for
large matrices. More recently, the theoretical lower bounds for
these evaluations were further characterized in \cite{JL25}.
Furthermore, the specific case of degree-20 polynomials arising from
Conjecture 5.1 in \cite{JL25} has been addressed in \cite{SIAD25}.

However, the coefficients in the evaluation formulas of
\cite{Sastre18} are derived from the roots of nonlinear polynomial
equations, resulting in multiple potential solution sets. As shown
in \cite[Sec. 3.1]{Sastre18}, the numerical stability of these sets
varies, making the identification of stable solutions a requirement
for maintaining precision. While stable solutions have been
identified for specific cases, such as Taylor approximations of the
matrix exponential \cite{SID19} and the matrix cosine
\cite{SIAPD19}, a general methodology for arbitrary polynomials has
yet to be established. Related schemes for specific approximation
degrees of the matrix exponential, sine, and cosine have also been
developed in \cite{BBS19, SBC21, BBSC22}.

This paper presents a systematic framework and a MATLAB
implementation, \texttt{MatrixPolEval1}, used to calculate and
validate stable coefficient sets based on \cite[Prop. 1]{Sastre18}
for matrix polynomials of degrees $m = 8, 10, 12$ and above. The
tool automates the selection process to identify the most stable
solution from the generated sets. Additionally, for cases where the
formulas in \cite[Prop. 1]{Sastre18} do not yield robust results,
alternative structural variants are provided. The efficiency and
numerical stability of the proposed framework are illustrated
through experiments arising in applications, including a comparison
with the PS method.

Throughout this paper, $\mathbb{N}$ denotes the set of natural
numbers $\{0, 1, 2, \dots\}$. For $x \in \mathbb{R}$, $\lceil x
\rceil$ denotes the smallest integer greater than or equal to $x$,
and $\lfloor x \rfloor$ denotes the largest integer less than or
equal to $x$.

This paper is organized as follows. Section \ref{sec:PS} briefly
reviews the PS method. Section \ref{sec:efficienteval} extends the
methodology from \cite{Sastre18} to provide a general framework for
matrix polynomial evaluation using one fewer matrix product than the
PS method. Section \ref{sec:Implementation} describes the
algorithmic implementation, including the treatment of structural
variants and numerical stability. Numerical experiments are
presented in Section \ref{sec:numericalresults}, and concluding
remarks are provided in Section \ref{sec:conclusions}.

\section{The Paterson--Stockmeyer matrix polynomial evaluation method}\label{sec:PS}

The PS evaluation scheme for matrix polynomials \eqref{P}, denoted
by $PS_{m}(A)$, is defined in \cite[Algorithm B]{PaSt73} as:
\begin{equation}\label{PPS}
\begin{aligned}
    PS_{m}(A) = & \left( \dots \left( \left( \sum_{j=0}^{s} b_{m-j} A^{s-j} \right) A^s + \sum_{j=1}^{s} b_{m-s-j} A^{s-j} \right) A^s + \dots \right) A^s \\
    & + \sum_{j=1}^{s} b_{s-j} A^{s-j},
\end{aligned}
\end{equation}
where $s \in \mathbb{N} \setminus \{0\}$ and $m$ is a multiple of
$s$. If $m$ is not a multiple of $s$, the formula \eqref{PPS}
remains applicable by evaluating $PS_{m_0}(A)$ with $m_0 = s \lceil
m/s \rceil$ and setting $b_k = 0$ for $k = m+1, \dots, m_0$.

For a given number of matrix products, the maximum degrees
achievable by the PS method, denoted by $m^*$, are given by:
\begin{equation}\label{eq_optimalm}
    m^* = s^2 \quad \text{or} \quad m^* = s(s+1),
\end{equation}
where $s \in \mathbb{N} \setminus \{0\}$. This generates the
sequence of optimal degrees $\mathcal{M} = \{1, 2, 4, 6, 9, 12,
\dots\}$ \cite[Sec. 2.1]{Sastre12}. Furthermore, \cite{Fasi19}
demonstrated the optimality of the rule $m^* = (C_{PS}-s+2)s$, where
$C_{PS}$ denotes the number of matrix products required by the PS
method and $s = \lfloor C_{PS}/2 \rfloor + 1$. This formulation
yields maximum degrees consistent with \eqref{eq_optimalm}
\cite[Sec. 2.1]{SaIb21}.

If $m \notin \mathcal{M}$, the polynomial $P_{m}(A)$ is evaluated
using the $PS_{m_0}(A)$ scheme by choosing $m_0 = \min \{ k \in
\mathcal{M} : k > m \}$ and setting the additional leading
coefficients to zero ($b_i = 0$ for $i = m+1, \dots, m_0$)
\cite[Sec. 2.1]{Sastre18}. Consequently, the minimum computational
cost of the PS method is:
\begin{equation}\label{PScost}
    C_{PS} = s + \left\lceil \frac{m}{s} \right\rceil - 2,
\end{equation}
where the optimal value of $s$ is either $\lfloor \sqrt{m} \rfloor$
or $\lceil \sqrt{m} \rceil$.

\section{Matrix polynomial evaluation beyond the PS method}\label{sec:efficienteval}

The methodology proposed in \cite{Sastre18} provides a framework to
evaluate matrix polynomials with reduced computational cost compared
to the PS method. This section details the evaluation of $P_8(A)$,
where the cost is reduced to $3M$ (one product fewer than the PS
method), and extends the framework to polynomials of degree $m > 8$.

\subsection{Coefficient solutions for $P_8(A)$ evaluation with $3M$}
To evaluate $P_8(A)$ with $3M$, the following nested evaluation
scheme is used \cite[Ex. 3.1]{Sastre18}:
\begin{equation}\label{m8s2by}
\begin{aligned}
    y_{02}(A) &= A^2(c_4A^2 + c_3A), \\
    y_{12}(A) &= (y_{02}(A) + d_2A^2 + d_1A)(y_{02}(A) + e_2A^2) \\
              &\quad + e_0y_{02}(A) + f_2A^2 + f_1A + f_0I.
\end{aligned}
\end{equation}By equating $y_{12}(A) = P_8(A)=\sum_0^8 b_iA^i$, a system of
nonlinear equations for the scalar coefficients $\{c_i, d_i, e_i,
f_i\}$ arises. While coefficients such as $f_{0,1,2}$ and $c_{3,4}$
are obtained directly from $b_i$, the remaining parameters depend on
the solution of a quadratic equation for $e_2$ \cite[Eq.
31]{Sastre18}:
\begin{equation}\label{m8s2bsystemsole2}
e_2 = \frac{\frac{c_3}{c_4}\underline{de}_2 - d_1 \pm
\sqrt{\left(d_1 - \frac{c_3}{c_4}\underline{de}_2\right)^2 +
4\frac{c_3}{c_4}\left(b_3 + \frac{c_3^2}{c_4}d_1 -
\frac{c_3}{c_4}b_4\right)}}{2c_3/c_4},
\end{equation}where $\underline{de}_2 = d_2 + e_2$. This quantity can be computed as
$\underline{de}_2 = (b_6 - c_3^2)/c_4$ using \cite[Eq.
27]{Sastre18}.

This expression yields up to four potential solution
sets depending on the signs of  $c_4=\pm\sqrt{b_8}$ \cite[Ex.
3.1]{Sastre18}. A fundamental requirement for this scheme is $b_8
\neq 0$. If $b_8 < 0$, the framework evaluates $y_{12}(A) =
-P_m(A)$. This prevents $c_4$ from becoming a complex value, thereby
avoiding complex arithmetic when $A$ is a real matrix.

\subsubsection{Handling the singular case $c_3=0$}
The original analysis in \cite{Sastre18} did not address the case
$c_3 = 0$ (occurring when $b_7 = 0$), which leads to an
indeterminate form in \eqref{m8s2bsystemsole2}. Our framework
identifies that in this scenario, the system reduces to a linear
form where
\begin{equation}\label{c3null_sol}
\begin{aligned}
    c_4 &= \pm\sqrt{b_8}, \quad d_1 = b_5/c_4, \quad e_2 = b_3/d_1, \\
    d_2 &= b_6/c_4 - e_2, \quad e_0 = (b_4 - d_2e_2)/c_4.
\end{aligned}
\end{equation}
This specific branch requires the additional condition $b_5 \neq 0$
to ensure $e_2$ is well-defined.

\subsection{Numerical stability and coefficient selection}
The existence of multiple solution sets necessitates a selection
criterion based on numerical stability. Following \cite{Sastre18},
we assess stability by computing the relative error $er_i$ for each
reconstructed coefficient $b_i$ when using finite-precision
arithmetic. If $\tilde{x}$ denotes the value of $x$ rounded to the
target precision, the error for $b_8$ is:
\begin{equation}\label{stability}
    er_{8} = \frac{|b_8 - \tilde{c}^2_4|}{|b_8|}.
\end{equation}
A solution set is considered stable if all $er_i$ are of the order
of the unit roundoff $u$ in the target precision (for instance, $u =
2^{-24} \approx 5.96 \times 10^{-8}$ for IEEE single precision and
$u = 2^{-53} \approx 1.11 \times 10^{-16}$ for IEEE double
precision). To ensure robustness, the framework extends this check
by using absolute errors when $b_i = 0$. This prevents division by
zero and facilitates the automated selection of the most accurate
evaluation scheme.

\subsection{General evaluation formula for $P_{m}(A)$ for $m \ge 8$}

In \cite[Sec. 3]{Sastre18}, a nested evaluation scheme was
introduced to compute matrix polynomials $P_m(A)$ of degree $m=4s$
($s \ge 2$) using $s+1$ matrix products. The structure is defined
as:
\begin{subequations}\label{eq:msy_original}
\begin{align}
y_{0s}(A) &= A^s\sum_{i=1}^{s}c_{s+i}A^i, \label{msy0order1} \\
y_{1s}(A) &= \left(y_{0s}(A)+\sum_{i=1}^{s}d_{i}A^i\right)\left(y_{0s}(A)+\sum_{i=2}^{s}e_{i}A^i\right) \nonumber \\
&\quad +e_{0}y_{0s}(A)+\sum_{i=0}^{s}f_{i}A^i, \label{msy1order1}
\end{align}
\end{subequations}where $c_{s+i}$, $d_i$, $e_i$, and $f_i$ are scalar coefficients.
The powers $A^i$ for $i=2,3,\dots,s$ are computed once and stored.
The total cost of evaluating $y_{1s}(A)$ is $C_{y_{1s}}=s+1$ matrix
products ($M$).

Equating $y_{1s}(A)=P_m(A)$ results in a system of $4s+1$ equations.
Provided that the following conditions are fulfilled:
\begin{subequations}\label{eq:conditions}
\begin{align}
b_{4s} &\neq 0, \label{c4sneq0} \\
d_s &\neq e_s \quad \text{for } s > 2, \label{ds_neq_es}
\end{align}
\end{subequations}the system reduces to finding the roots of a polynomial in the
variable $e_s$. Again, if the leading coefficient $b_{4s}$ is
negative, the framework equates $y_{1s}(A) = -P_m(A)$. This ensures
that the coefficient $c_{2s} = \pm\sqrt{b_{4s}}$ remains real,
thereby avoiding complex arithmetic when $A$ is a real matrix
\cite[Sec. 3]{Sastre18}.

For polynomials of degree $m \ge 8$, the scheme \eqref{msy1order1}
is integrated into the PS-like structure $z_{1ps}(A)$ \cite[Eq.
52]{Sastre18}. Let $m = 4s + p$, where $p \ge 0$. The generalized
formula $z_{1ps}(A)$ is defined as:
\begin{equation}\label{mixps}
\begin{aligned}
    z_{1ps}(A) &= \left( \dots \left( \left( y_{1s}(A)A^s + \sum_{i=1}^{s} a_{p-i} A^{s-i} \right) A^s + \sum_{i=1}^{s} a_{p-s-i} A^{s-i} \right) A^s + \dots \right. \\
    &\quad \times \left. A^s + \sum_{i=1}^{s} a_{p-(t-1)s-i} A^{s-i} \right) A^{r} + \sum_{j=0}^{r-1} a_{j} A^{j},
\end{aligned}
\end{equation}where $t = \lfloor p/s \rfloor$ is the number of full blocks of
degree $s$, and $r = p \bmod s$ is the degree of the remainder block,
such that $s > r \ge 0$.

\begin{theorem}\label{thm:cost}
Let $P_m(A)$ be a matrix polynomial of degree $m \ge 8$. For $s \ge
2$ and $m = 4s + p$ ($p \ge 0$), $P_m(A)$ can be evaluated using
\eqref{mixps} with a cost of
\begin{equation}\label{costz1s}
C_{z_{1ps}}(m) = (s + 1 + \lceil p/s \rceil) M.
\end{equation}Furthermore, for any $m \ge 8$ (excluding $m=9, 11$),
there exists a value of $s$ such that $C_{z_{1ps}}(m) =
C_{PS}(m)-1$, where $C_{PS}(m)$ is the cost of the optimal PS
method.
\end{theorem}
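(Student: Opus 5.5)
The plan has two parts. First I count the matrix products in \eqref{mixps} directly, which gives \eqref{costz1s}. Then I compare that count with \eqref{PScost} for a suitable choice of $s$.

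\textbf{Cost count.} First compute and store $A^2,\dots,A^s$, which costs $s-1$ products. Next form $y_{0s}(A)$ in \eqref{msy0order1} as the product of $A^s$ with a linear combination of stored powers, which costs one product. Then form $y_{1s}(A)$ in \eqref{msy1order1}, which needs one more product. This gives $s+1$ products in total, as already noted after \eqref{eq:msy_original}. The outer Horner-type loop in \eqref{mixps} then multiplies by the stored $A^s$ exactly $t=\lfloor p/s\rfloor$ times. If $r=p \bmod s>0$, it performs one final multiplication by the stored $A^r$, since $r<s$. Thus the loop costs $t+[r>0]=\lceil p/s\rceil$ products. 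For correctness, I would note two things. The term $y_{1s}(A)A^{p}$ has degree $4s+p=m$, so equating $y_{1s}$ with the polynomial formed by the top $4s+1$ coefficients $b_m,\dots,b_p$ is exactly the solvable system described after \eqref{eq:conditions}. The remaining low-degree coefficients $a_j$ are simply $b_j$, $j<p$, distributed among the blocks. This proves the first claim.

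\textbf{Key identity.} Since $p=m-4s$, we have $\lceil p/s\rceil=\lceil m/s\rceil-4$. Hence
\begin{equation*}
C_{z_{1ps}}(m)=s+\lceil m/s\rceil-3=\bigl(s+\lceil m/s\rceil-2\bigr)-1 .
\end{equation*}
This equals the PS cost with block size $s$, minus one. So it suffices to exhibit an $s\ge 2$ that minimizes $g(s)=s+\lceil m/s\rceil$ over the integers and also satisfies $4s\le m$, i.e.\ $p\ge 0$.

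\textbf{Choice of $s$.} I will show that $k=\lfloor\sqrt m\rfloor$ always minimizes $g$. Write $m=k^2+j$ with $0\le j\le 2k$. Then $g(k)=2k+\lceil j/k\rceil$, which equals $2k$, $2k+1$ or $2k+2$ according as $j=0$, $1\le j\le k$, or $j>k$. On the other hand, $g(s)\ge s+m/s\ge 2\sqrt m$ for every $s$, so $\min g\ge\lceil 2\sqrt m\rceil$. We have $4m=4k^2+4j$, compared with $(2k+1)^2=4k^2+4k+1$. This shows that $\lceil 2\sqrt m\rceil$ equals $2k$, $2k+1$ and at least $2k+2$ in the same three cases, so $k$ attains the minimum. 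The constraint $4k\le m$ holds whenever $k\ge 4$, i.e.\ $m\ge 16$, because then $4k\le k^2\le m$. It also holds for $k=3$ and $12\le m\le 15$.

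\textbf{Small cases (the main obstacle).} The remaining cases $8\le m\le 11$ I would check by hand.
\begin{itemize}
\item For $m=8$ and $m=10$, the choice $s=2$ is also a minimizer of $g$: $g(2)=6=g(3)$ and $g(2)=7=g(3)$ respectively. It satisfies $4s\le m$, so the saving of $1M$ holds.
\item For $m=9$ and $m=11$, the only admissible value is $s=2$, since $4s\le m$ forces it. Here $g(2)=g(3)+1$, so \eqref{mixps} merely matches the optimal PS cost. This explains the exclusions in the statement.
\end{itemize}
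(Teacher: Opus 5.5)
Your proof is correct. The product count is the same as the paper's; you also check that $z_{1ps}(A)=y_{1s}(A)A^{p}+\sum_{j<p}b_jA^j$, which the paper leaves implicit. For the saving, however, you take a different and stronger route.

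The paper handles $m=8,\dots,11$ by hand. For $m\ge 12$ it only shows that the windows $[4s,5s]$, $s\ge 3$, cover every degree, i.e.\ that one can always write $m=4s+p$ with $0\le p\le s$. It then asserts, without justification, that the $1M$ saving holds throughout these windows.

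You instead prove the identity $C_{z_{1ps}}(m)=(s+\lceil m/s\rceil-2)-1$. It says the new scheme costs exactly one product less than PS with the \emph{same} block size $s$. The claim then reduces to finding a minimizer of $g(s)=s+\lceil m/s\rceil$ with $4s\le m$. Your bound $g(s)\ge\lceil 2\sqrt m\rceil$ shows that $\lfloor\sqrt m\rfloor$ is such a minimizer for $m\ge 12$, and $s=2$ covers $m=8,10$.

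Your route buys two things. First, it gives an actual proof together with a characterization: the saving is exactly $1M$ precisely when $s$ is a PS-optimal block size, and it is never more. Second, it shows why the paper's window argument cannot work as stated.
\begin{itemize}
\item For $m=36$, the only $s$ with $0\le p\le s$ are $s=8$ and $s=9$. Both give $10M=C_{PS}(36)$, whereas your choice $s=6$, $p=12$ gives $9M$.
\item More generally, a window choice forces $s\ge m/5$, so its cost grows linearly in $m$. Meanwhile $C_{PS}(m)\approx 2\sqrt m$, which is why the saving needs $s\approx\sqrt m$ with $p$ typically much larger than $s$.
\end{itemize}

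The paper's version is shorter, but yours is the one that actually establishes the second claim. Both arguments rely equally on \cite{Sastre18} for solvability of the nonlinear system defining $y_{1s}$.
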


\begin{proof}
The cost $s+1$ accounts for the evaluation cost of $y_{0s}(A)$ from
\eqref{msy0order1}, $y_{1s}(A)$ from \eqref{msy1order1} and the
initial powers $A^2,\,A^3,\ldots\,A^s$. The term $\lceil p/s \rceil$
accounts for the $t$ multiplications by $A^s$ in the nesting levels
of \eqref{mixps} and, if $r > 0$, one final multiplication by $A^r$.

Regarding the efficiency compared to the PS method, whose cost is
given by \eqref{PScost}, we analyze the available degrees $m$:
\begin{itemize}
\item For $s=2$, the formula covers $m = 8 + p$:
\begin{itemize}
    \item $m=8$ ($p=0$): $C_{z_{1ps}} = 3M$, while $C_{PS} = 4M$.
    \item $m=9$ ($p=1$): $C_{z_{1ps}} = 4M$. The optimal PS cost is also $4M$ (using $s=3$ in \eqref{PPS}); thus, this method does not provide a reduction in this specific case.
    \item $m=10$ ($p=2$): $C_{z_{1ps}} = 4M$, while $C_{PS} = 5M$.
    \item $m=11$ ($p=3$): $C_{z_{1ps}} = 5M$. Since the optimal PS cost for $m=11$ is also $5M$ (e.g., using $s=3$ or $s=4$), no computational saving is achieved for this degree.
\end{itemize}
    \item For $m \ge 12$, any degree can be reached by selecting an appropriate $s$ and $p$, consistently achieving a cost reduction of
    $1M$:
    \begin{itemize}
        \item For $s=3$, the range $m \in [12, 15]$ is covered by varying $p \in \{0, 1, 2, 3\}$.
        \item For $m=16$, we shift to $s=4$ ($4s=16$).
    \end{itemize}
\end{itemize}

In general, for any $s \ge 3$, the range of degrees $[4s, 4s+s]$ is
fully covered. Since the starting point of the next range, $4(s+1)$,
is always less than or equal to the end of the current range, $5s$,
for all $s \ge 4$ (and specifically $4(3)+3=15$ is followed by
$4(4)=16$), there are no gaps in the available degrees for $m \ge
12$. In all these cases, the cost is reduced by one matrix product
compared to the optimal PS cost.
\end{proof}

The practical application of formula \eqref{mixps} depends on
numerical stability. As noted in \cite[Ex. 3.2]{Sastre18},
coefficient selection is required to maintain accuracy. For example,
evaluating $\exp(1)$ with $m=28$ and $s=7$ using an unstable
coefficient set results in a relative error of $9.81 \times
10^{-5}$, whereas the most stable set identified by the proposed
framework yields $8.63 \times 10^{-17} < u$, as shown in Section
\ref{sec:numericalresults}.

If no stable solution is found for a specific pair $(s, p)$, the
framework allows to explore alternative decompositions for the same
$m$. For $m=28$, this includes $(s=4, p=12)$, $(s=5, p=8)$, or
$(s=6, p=4)$. For the specific case $m=12$, an alternative formula
provided in \cite[Sec. 5.1]{JL25} avoids square root calculations
and requires only a non-zero leading coefficient. The requirement
for a square root of the leading coefficient in \eqref{msy0order1}
and \eqref{msy1order1} can also be circumvented by reformulating the
scheme as:
\begin{subequations}\label{eq:msy_nosqrt}
\begin{align}
y_{0s}(A) &= A^s \left( A^s + \sum_{i=1}^{s-1} c_{s+i} A^i \right), \label{msy0order1nosqrt} \\
y_{1s}(A) &= b_{4s} \left( y_{0s}(A) + \sum_{i=1}^{s} d_{i} A^i \right) \left( y_{0s}(A) + \sum_{i=2}^{s} e_{i} A^i \right) \nonumber \\
&\quad + e_{0} y_{0s}(A) + \sum_{i=0}^{s} f_{i} A^i,
\label{msy1order1nosqrt}
\end{align}
\end{subequations}where $b_{4s} \neq 0$ is the leading coefficient of $P_m(A)$. The
combination of the nested structures \eqref{msy0order1} and
\eqref{msy1order1} with \eqref{mixps} generates multiple solution
sets. This redundancy allows for the automated selection of the most
stable solution, thereby improving the numerical accuracy of the
evaluation.

\section{Numerical implementation and solver selection}\label{sec:Implementation}

The proposed methodology is summarized in Algorithm
\ref{MatrixPolEval1}. As demonstrated in \cite[pp.
237--240]{Sastre18}, the system of $4s+1$ nonlinear equations
derived by equating the coefficients of $y_{1s}(A)$ and $P_m(A)$ can
be reduced via variable substitution to computing the roots of a
univariate polynomial. For this reason, the implementation relies on
the \texttt{vpasolve} function from the MATLAB Symbolic Math Toolbox
\cite{SymbolicToolbox}.

Unlike standard numerical solvers that converge to a single local
solution, \texttt{vpasolve} utilizes variable precision arithmetic
(VPA) to search for multiple sets of roots. This is a critical
feature for the proposed algorithm, as it allows for the exploration
of the solution space to identify the set of coefficients that
exhibits the highest numerical stability. Furthermore, working with
VPA and increased significant digits during the solving process
mitigates the accumulation of rounding errors that typically occur
when dealing with ill-conditioned polynomial systems. While the
symbolic computation may entail a higher initial cost, this
calculation is performed offline. Once the stable coefficients are
obtained, they are stored and used for efficient matrix polynomial
evaluations.

To provide more flexibility, the algorithm implements three
structural variants for $y_{1s}(A)$ based on the base level
\begin{equation}\label{msy0_final}
    y_{0s}(A) = A^s \sum_{i=1}^{s} c_{m+1-s+i} A^i,
\end{equation}
where the coefficients $c$ are indexed correlatively to facilitate
numerical implementation and ensure consistency with the provided
MATLAB code. Selected via the \texttt{type\_pol} parameter detailed
below, these variants expand the search space for stable real
solutions and are defined as follows:

\begin{enumerate}
    \item Type 1 (corresponding to the original \eqref{msy1order1}):
    \begin{equation}\label{type1_math}
    \begin{split}
        y_{1s}(A) &= \left(y_{0s}(A) + \sum_{i=1}^{s} c_{m+2-2s+i-1} A^i \right) \\
        &\quad \times \left(y_{0s}(A) + \sum_{i=2}^{s} c_{m+3-3s+i-2} A^i \right) \\
        &\quad + c_{m+2-3s}y_{0s}(A) + \sum_{i=0}^{s} c_{m+1-4s+i} A^i
    \end{split}
    \end{equation}

    \item Type 2:
    \begin{equation}\label{type2_math}
    \begin{split}
        y_{1s}(A) &= \left(y_{0s}(A) + \sum_{i=0}^{s} c_{m+1-2s+i} A^i \right) \\
        &\quad \times \left(y_{0s}(A) + \sum_{i=2}^{s} c_{m+2-3s+i-2} A^i \right) \\
        &\quad + \sum_{i=0}^{s} c_{m+1-4s+i} A^i
    \end{split}
    \end{equation}

    \item Type 3:
    \begin{equation}\label{type3_math}
    \begin{split}
        y_{1s}(A) &= \left(y_{0s}(A) + \sum_{i=1}^{s} c_{m+2-2s+i-1} A^i \right) \\
        &\quad \times \left(y_{0s}(A) + \sum_{i=1}^{s} c_{m+2-3s+i-1} A^i \right) \\
        &\quad + \sum_{i=0}^{s} c_{m+1-4s+i} A^i
    \end{split}
    \end{equation}
\end{enumerate}

The selection of the optimal coefficient set among the multiple
solutions is based on a stability metric $\epsilon_{max}$. While
previous approaches \cite{Sastre18, SID19, SIAPD19} typically rely
on a relative error criterion, our framework extends the stability
check to ensure robustness for polynomials with vanishing
coefficients. Specifically, the error $\delta_i$ for each
reconstructed coefficient $\hat{b}_i$ is computed as $|b_i -
\hat{b}_i|/|b_i|$ when $b_i \neq 0$ (following \cite[Ex.
3.1]{Sastre18}), and as the absolute error $|b_i - \hat{b}_i|$
otherwise. This hybrid approach prevents numerical singularities
during the validation phase and allows the tool to identify stable
configurations for a broader class of matrix polynomials.

The main input and output parameters of the algorithm are detailed
in Table \ref{tab:algorithm_parameters}. To ensure proper execution,
the implementation validates the inputs and issues an error if the
leading coefficient is zero ($b_m = 0$), if the structural parameter
$s$ violates the required bounds ($s < 2$ or $4s > m$), or if the
specified \texttt{precision} is neither `single' nor `double'.
Nevertheless, the output variable \texttt{c\_vpa} provides the
coefficients computed with \texttt{ndigits} variable-precision
arithmetic, allowing users to manually round them to any arbitrary
target precision. Moreover, the output variable \texttt{all\_cvpa}
contains all sets of real and complex solutions obtained within the
same high-precision environment, enabling the manual selection of
alternative solution sets. Note that, by default, the parameter $s$
is chosen as the minimum value that achieves a $1M$ saving over the
PS method, thereby minimizing the number of intermediate matrix
powers stored and reducing memory overhead.

\begin{table}[htpb]
\centering \caption{Input and output parameters for Algorithm
\ref{MatrixPolEval1}.} \label{tab:algorithm_parameters}
\begin{tabularx}{\linewidth}{l X} 
\hline \multicolumn{2}{l}{\textit{Input Parameters}} \\ \hline
\texttt{b} & Vector of polynomial coefficients $\{b_0, \dots, b_m\}$ (default: Taylor expansion of $\exp$ of degree 8). \\
\texttt{precision} & Target IEEE arithmetic (default: `double', options: `single', `double'). \\
\texttt{type\_pol} & Structural variant selection (default: 1, options: 1, 2, or 3). \\
\texttt{ndigits} & Number of digits for VPA (default: 32 for double, 16 for single). \\
\texttt{s} & Maximum matrix power $A^s$ (default: minimal $s$ providing a $1M$ saving over the PS method to optimize memory usage). \\
\hline \hline \multicolumn{2}{l}{\textit{Output Parameters}} \\
\hline
\texttt{c\_prec} & Coefficients $c$ rounded to target IEEE \texttt{precision} maximizing stability. \\
\texttt{leading\_coeff\_sign} & Sign of $b_m$ ($\pm 1$). If $-1$, coefficients are computed for $-P_m(A)$. \\
\texttt{c\_vpa} & Stable coefficients evaluated in VPA (\texttt{ndigits}). \\
\texttt{er\_min} & Minimum relative error among maximum reconstruction errors. \\
\texttt{all\_cvpa} & Collection of all valid solution sets found by the solver. \\
\texttt{savings} & Number of matrix products saved compared to the optimal PS method. \\
\texttt{s}, \texttt{p} & Selected values for the structural
parameters $s$ and $p$. \\ \hline
\end{tabularx}
\end{table}

\begin{algorithm}[htpb]
\caption{Selection of stable matrix polynomial evaluation
coefficients based on the reconstruction error
test.}\label{MatrixPolEval1}
\begin{algorithmic}[1]
\Require \texttt{b}, \texttt{precision}, \texttt{type\_pol},
\texttt{s}, \texttt{ndigits} \Ensure \texttt{c\_prec},
\texttt{leading\_coeff\_sign}, \texttt{c\_vpa}, \texttt{er\_min},
\texttt{all\_cvpa}, \texttt{savings}, \texttt{s}, \texttt{p}

\If{$m < 8$ or $m \in \{9, 11\}$}
    \State Recommend PS method and exit.
\EndIf

\If{$b_m < 0$}
    \State \texttt{leading\_coeff\_sign} $\gets -1$
    \State Change the sign of the coefficients: \texttt{b} $\gets -$\texttt{b}.
\Else
    \State \texttt{leading\_coeff\_sign} $\gets 1$
\EndIf

\State If \texttt{s} is not provided, select the minimum $s$ based
on Theorem \ref{thm:cost} that saves $1M$ over the PS method.

\State Select the structural variant from
\eqref{type1_math}--\eqref{type3_math} based on \texttt{type\_pol}.

\State Initialize symbolic environment with \texttt{ndigits}
precision.

\State Construct the system of $4s+1$ nonlinear equations from
$y_{1s}(A) = P_m(A)$.

\State Solve the system using \texttt{vpasolve} to obtain all
possible solution sets \texttt{all\_cvpa}.

\State Round each real solution set to the target
\texttt{precision}.

\State For each real set, reconstruct the coefficients $\hat{b}_i$
and compute the stability metric $\epsilon_{max} = \max_i \{
\delta_i \}$, where: \Statex \quad $\delta_i = |b_i -
\hat{b}_i|/|b_i|$ if $b_i \neq 0$ \cite[Ex. 3.1]{Sastre18}, and
$\delta_i = |b_i - \hat{b}_i|$ otherwise.

\State Select the set that minimizes $\epsilon_{max}$ to define
\texttt{c\_prec} and \texttt{c\_vpa}, and set \texttt{er\_min}
$\gets \min\{\epsilon_{max}\}$.

\If{\texttt{er\_min} $> 10u$}
    \State Issue a warning regarding potential inaccuracies.
\Else
    \State Issue a notification indicating the formulas are stable.
\EndIf

\State Calculate \texttt{savings} $= C_{PS}(m) - C_{z1ps}(m)$.

\State \Return \texttt{c\_prec}, \texttt{leading\_coeff\_sign},
\texttt{c\_vpa}, \texttt{er\_min}, \texttt{all\_cvpa},
\texttt{savings}, \texttt{s}, \texttt{p}
\end{algorithmic}
\end{algorithm}

A MATLAB implementation of Algorithm \ref{MatrixPolEval1} is
available at \url{https://github.com/hipersc/MatrixPolynomials/}.
The suffix ``1'' in the function name denotes the reduction of one
matrix product relative to the PS method. To facilitate ease of use,
the function leverages the aforementioned default parameters,
allowing for a simplified call such as \texttt{c =
MatrixPolEval1(b)}. This minimal syntax automatically returns the
optimal evaluation coefficients in IEEE double precision for the
polynomial defined by the vector \texttt{b}. Further details
regarding advanced configurations and outputs are available in the
function's built-in documentation.

\section{Numerical results}\label{sec:numericalresults}

In this section, we present several numerical examples to
demonstrate the effectiveness and flexibility of the proposed
approach. All numerical experiments were conducted using MATLAB
R2024a on an Apple MacBook Pro equipped with an M4 Max chip and 48
GB of RAM.

\newpage
\begin{example}{Parameter Selection Flexibility.} \label{ex32Sastre18}

In \cite[Ex. 3.2, p. 243]{Sastre18}, an example was presented to
highlight the critical importance of selecting a stable set of
coefficients. This case involved evaluating the Taylor approximation
of the matrix exponential of degree $m=28$ using the evaluation
scheme defined in \eqref{msy0order1} and \eqref{msy1order1} with
$s=7$. According to \eqref{costz1s} and \eqref{PScost}, the
computational cost of this scheme is $1M$ lower than that of the PS
method.

However, it was found that a solution with multiplicity 10 existed,
leading to significant numerical instability if that specific
coefficient set was used. Evaluating the scalar case $\exp(x)$ for
$x=1$ served as a clear demonstration of this instability, yielding
a relative error of $|\exp(x)-y_{1,s=7}(x)|/\exp(x) = 9.81 \times
10^{-5} \gg u$ in IEEE double precision arithmetic. Furthermore, the
stability check for the best available solution was
$\mathcal{O}(10^{-15}) > 10u$. This is confirmed by the function
call

\texttt{c = MatrixPolEval1(1./factorial(sym(0:28)), [], [], [], 7)}

which issues a warning that the evaluation formulas are likely to be
inaccurate.

By contrast, using the default call

\texttt{c = MatrixPolEval1(1./factorial(sym(0:28)))}

\noindent the function automatically proposes the evaluation formula
based on \eqref{mixps}, \eqref{msy0order1}, and \eqref{msy1order1}
with $s=4$ and $p=12$. This selection results in an \texttt{er\_min}
$= 8.63 \times 10^{-17}$. When evaluated using 200-digit variable
precision arithmetic, the relative error is
$|\exp(1)-z_{1,p=12,s=4}(1)|/\exp(1) \approx 5.32 \times 10^{-17} <
u$.

If the stability check for these formulas fails, one of the
alternative selections for \texttt{type\_pol} can be chosen. If
those alternatives also prove unstable, other values of $s$ that
still achieve a $1M$ saving can be explored. For instance, setting
$s=5$ in the following call:
\begin{verbatim}
[c, ~, c_vpa, er_min, all_cvpa, savings, s, p] = ...
    MatrixPolEval1(1./factorial(sym(0:28)), [], [], [], 5)
\end{verbatim}
yields $p=8$ and \texttt{savings}=1. The \texttt{all\_cvpa} variable
contains 8 real solutions, with \texttt{er\_min} $= 8.63 \times
10^{-17}$, maintaining a relative error for $\exp(1)$ of $5.32
\times 10^{-17} < u$.

Similar values for \texttt{er\_min} and the relative error with
respect to $\exp(1)$ are obtained by selecting $s=6$ and $p=4$,
which also provides \texttt{savings}=1 and 20 sets of coefficients
(of which 4 are real). This example confirms the high degree of
flexibility offered by the evaluation formulas in finding stable,
cost-efficient parameters.
\end{example}

\begin{example}{Real Solutions and Stability Check for the Matrix Exponential.}\label{ex:realsol}

The stability of using \eqref{mixps} with \eqref{msy0order1} and
\eqref{msy1order1} to evaluate Taylor approximations of the matrix
exponential for degrees $m \in \{8, 12, 16, \dots, 81\}$ was
analyzed in \cite[Ex. 3.2]{Sastre18}. In that study, the evaluation
formulas $z_{1ps}$, $y_{1s}$, and $y_{0s}$ were defined by
\eqref{mixps}, \eqref{msy1order1}, and \eqref{msy0order1},
respectively. It was demonstrated that real and stable coefficient
sets exist for every value of $m$ according to the stability
criteria established therein.

The following MATLAB code uses the function \texttt{MatrixPolEval1}
to obtain the coefficients for all Taylor degrees $m \in \{8, 10,
12, 13, \dots, 81\}$ where a saving of one matrix product ($1M$) is
achieved relative to the PS method:

\begin{verbatim}
k = 0;
for m = [8 10 12:81]
    k = k + 1;
    b = 1./factorial(sym(0:m));
    % Get coefficients and stability check value
    [c, ~, ~, er_min, ~, ~, s, p] = MatrixPolEval1(b);
    er_minvec(k) = er_min;
end
er_min_max = max(er_minvec)
\end{verbatim}

The execution of this code confirms that a real, stable set of
coefficients exists for the evaluation scheme \eqref{msy0order1},
\eqref{msy1order1} and \eqref{mixps} across all considered degrees.
The maximum value of \texttt{er\_min} was $2.07 \times 10^{-16}$,
occurring at $m=46$ with parameters $s=6$ and $p=22$.

To further verify stability, consider a scalar $x > 0$ such that the
first term of the Taylor remainder equals the unit roundoff $u =
2^{-53}$ in double precision; i.e., $x^{47}/47! = 2^{-53}$, which
yields $x \approx 8.40$. Using the corresponding evaluation formula
for this value, the relative error was $|\exp(x) -
z_{1,p=22,s=6}(x)|/\exp(x) = 1.40 \times 10^{-17} < u$. In this
example, the results confirm that the proposed coefficients maintain
numerical accuracy even as the magnitude of $x$ increases.
\end{example}

\begin{example}{Applications to Finite Geometric Series, Control Theory, and Network Analysis.}\label{control}

Consider the finite matrix geometric series:
\begin{equation}\label{psiWestreich}
\Psi(N,A) = I + A + A^2 + \dots + A^{N-1}.
\end{equation}
This polynomial is fundamental in multirate sampled-data systems
\cite{89Wes, 92LeiNak} and in the analysis of large-scale
discrete-time control systems, particularly for computing
reachability properties and finite-time gramians \cite{Antoulas05}.
In network analysis, it is used to evaluate path connectivity and
the Katz centrality index, where it accounts for the total influence
across all paths of length up to $N-1$ \cite{Katz53}. Furthermore,
$\Psi(N,A)$ serves as a primary computational block for high-order
Schulz-type iterative methods used in the numerical computation of
generalized inverses \cite{Li11}.

From \cite[Table 1]{89Wes}, the cost of evaluating $\Psi(17,A)$
(degree $m=16$) is $6M$. While \cite[Sec. III]{92LeiNak} proposes an
improved algorithm for $N \leq 36$, its computational cost remains
$6M$. Using the proposed tool via the call
\texttt{c~=~MatrixPolEval1(ones(17,1))}, a set of coefficients in
IEEE double precision is obtained for evaluating $\Psi(17,A)$ using
the scheme in \eqref{mixps} with parameters $s=4$ and $p=0$. This
reduces the cost to $5M$ (a saving of $1M$). The stability check
yields $E_r = 3.69 \times 10^{-16} < 10u$, indicating that the
formulas are numerically stable and likely to be accurate.

Note that the call

\texttt{[c,\textasciitilde, c\_vpa, er\_min, all\_cvpa] =
MatrixPolEval1(ones(17,1))}

\noindent identifies 4 sets of real coefficients and 8 sets of
complex coefficients. The function selects the most stable real set
by default and confirms its accuracy with a stability check of
\texttt{er\_min} $\leq 10u$.

For comparison, the evaluation schemes proposed in \cite[Sec.
II]{89Wes} for $N \in \{9, 13, 17\}$ are:
\begin{subequations}
\begin{align}
\label{Psi9Wes} \Psi(9,A)  &= I + (I + A^2)(A + A^2)(I + A^4), \\
\label{Psi13Wes} \Psi(13,A) &= I + (I + A^2)(A + A^2)(I + A^4 + A^8), \\
\label{Psi17Wes} \Psi(17,A) &= I + (I + A^2)(A + A^2)(I + A^4)(I +
A^8),
\end{align}
\end{subequations}
where matrix powers are computed once and reused. The costs for
these evaluations are $4M$, $5M$, and $6M$, respectively. In
contrast, the scheme proposed by \texttt{MatrixPolEval1} using
\eqref{msy0order1}, \eqref{msy1order1} and \eqref{mixps} reduces the
cost by $1M$ in each case.

We compared both methods for $\Psi(17,A)$ using 100 random $100
\times 100$ matrices with the following MATLAB code:
\begin{verbatim}
rng(0); % Ensure reproducibility
c = MatrixPolEval1(ones(17,1));
for i = 1:100
    A = rand(100);
    I = eye(size(A)); A2 = A^2; A4 = A2^2; A8 = A4^2;
    % Westreich evaluation
    W = I + (I + A2) * (A + A2) * (I + A4) * (I + A8);
    % Proposed evaluation (s=4, p=0)
    A3 = A2 * A;
    y0 = A4 * (c(1)*A4 + c(2)*A3 + c(3)*A2 + c(4)*A);
    y1 = (y0 + c(5)*A4 + c(6)*A3 + c(7)*A2 + c(8)*A) ...
         * (y0 + c(9)*A4 + c(10)*A3 + c(11)*A2) ...
         + c(12)*y0 + c(13)*A4 + c(14)*A3 ...
         + c(15)*A2 + c(16)*A + c(17)*I;
    er_rel(i) = norm(y1 - W, 1) / norm(W, 1);
end
max_error = max(er_rel)
\end{verbatim}

The results showed a maximum relative difference of $4.00 \times
10^{-16} \approx 3.61u$. For $10000 \times 10000$ matrices, the
difference was $8.77 \times 10^{-17} < u$. These comparisons are
summarized in Table \ref{Tab_compPSWy1}, demonstrating high accuracy
and consistency with the stability check results from \cite[Ex.
3.1]{Sastre18}.

\begin{table}[ht!]
\centering \setlength{\tabcolsep}{10pt}
\begin{tabular}{lccc}
\hline Matrix Size & \multicolumn{3}{c}{Rel. Diff. (multiples of
$u$)} \\ \cline{2-4} ($n$) & Proposed vs PS & Proposed vs W & W vs
PS \\ \hline
100   & $3.28u$ & $3.61u$ & $3.30u$ \\
1000  & $2.03u$ & $2.01u$ & $2.06u$ \\
10000 & $1.92u$ & $0.76u$ & $1.91u$ \\ \hline
\end{tabular}
\caption{Relative differences for evaluating $\Psi(17,A)$ for 100
random matrices of sizes $n=100, 1000,$ and $10000$. Results compare
the matrix polynomial evaluation using the coefficients from
\texttt{MatrixPolEval1}, the PS method \eqref{PPS} with
$(m,s)=(16,4)$, and the Westreich (W) formula \eqref{Psi17Wes}.
Values are expressed as multiples of the unit roundoff in IEEE
double precision arithmetic $u = 2^{-53} \approx 1.11 \times
10^{-16}$.} \label{Tab_compPSWy1}
\end{table}

{\sloppy Finally, we tested matrices from the Matrix Computation
Toolbox (MCT)~\cite{MCT} and the Eigtool MATLAB Package (EMP)
\cite{EMP} with sizes $n = 100,$ $500$ and $1000$. The reference
``exact'' values for $\Psi(N,A)$ with $N = 9,$ $13$ and $17$ were
computed using variable precision arithmetic (VPA) via the MATLAB
Symbolic Math Toolbox. Specifically, each matrix $A$ was converted
to a symbolic representation, and the summation $\sum_{i=0}^{N-1}
A^i$ was evaluated using the PS formula \eqref{PPS} with $s=4$ for
degrees $m = 8, 12$ and $16$ with 100 decimal digits of accuracy.
Matrices 16, 17, 21, 42, and 44 from the MCT and matrix 4 from the
EMP gave infinite or Nan results by following that procedure and
where eliminated from the tests.\par}

Figures \ref{fig_m8}, \ref{fig_m12}, and \ref{fig_m16} show the
sorted 1-norm relative errors. While Westreich's formulas
occasionally show slightly different error norms, the PS method and
the proposed \texttt{MatrixPolEval1} scheme yield virtually
identical accuracy relative to the high-precision VPA baseline.
\end{example}

\begin{figure}[H]
    \begin{tabular}{cccccc}
        \includegraphics[width=6cm]{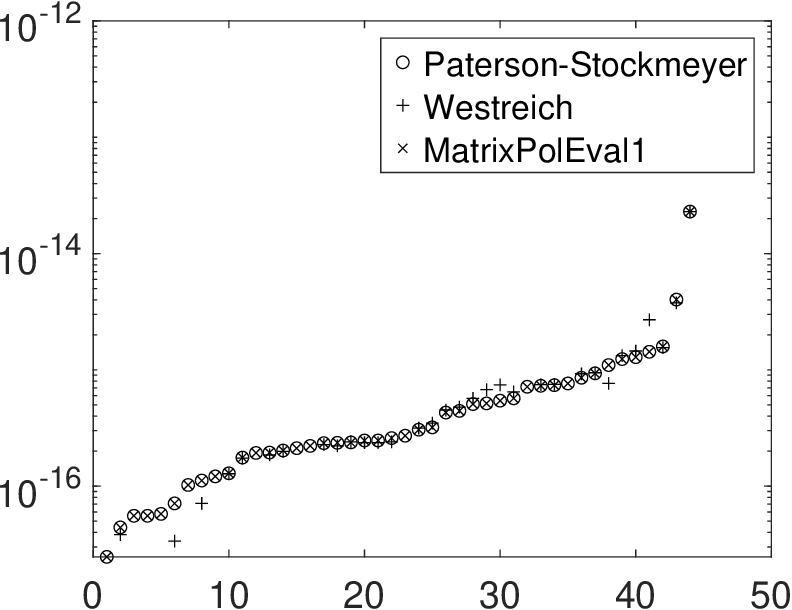}
        &\includegraphics[width=6cm]{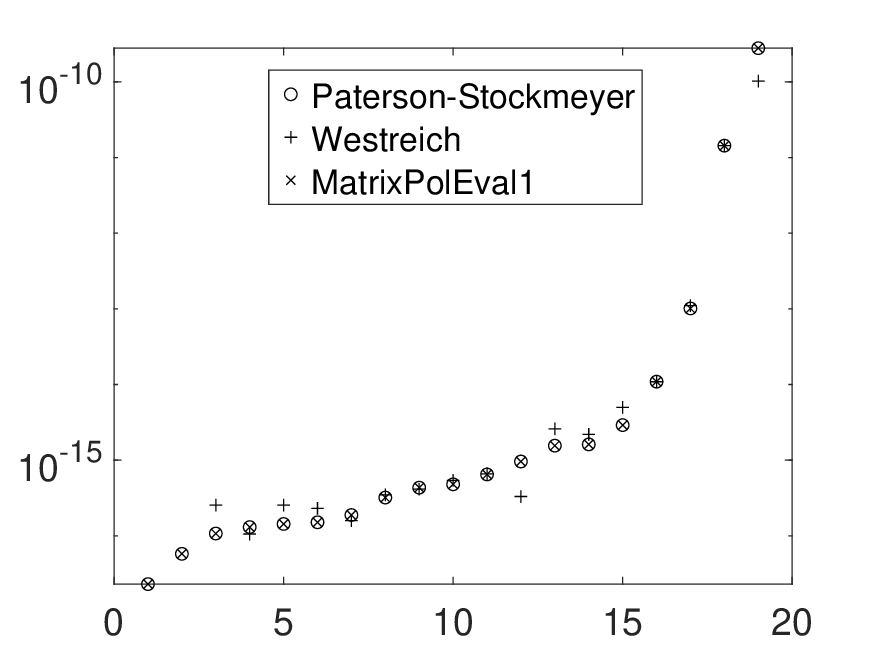}\\
        \includegraphics[width=6cm]{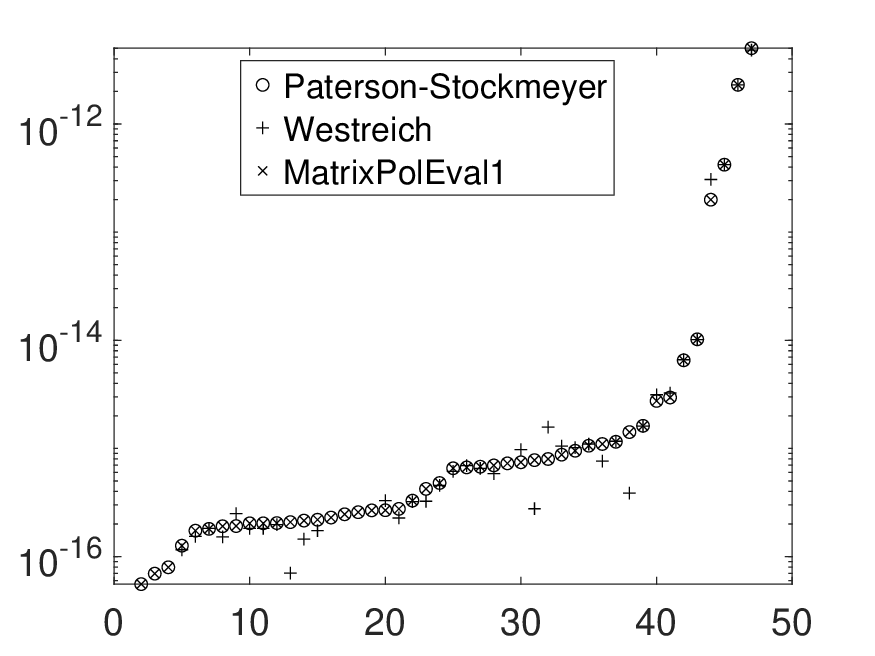}
        &\includegraphics[width=6cm]{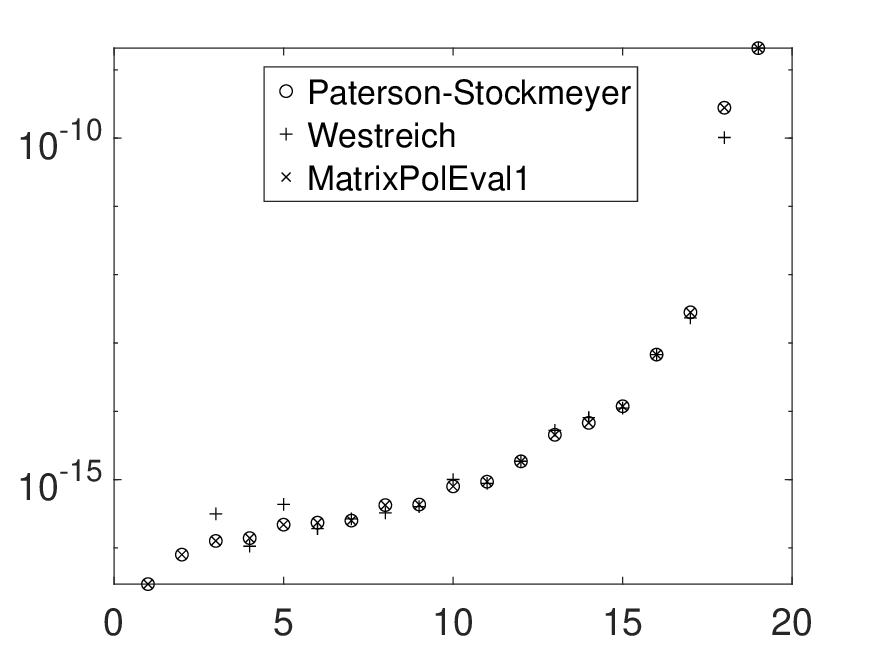}\\
        \includegraphics[width=6cm]{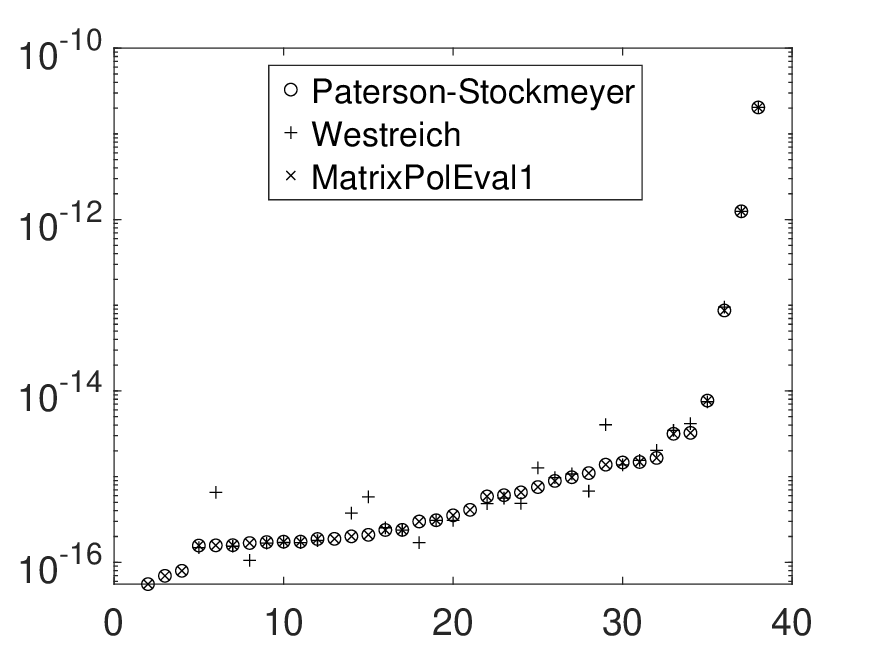}
        &\includegraphics[width=6cm]{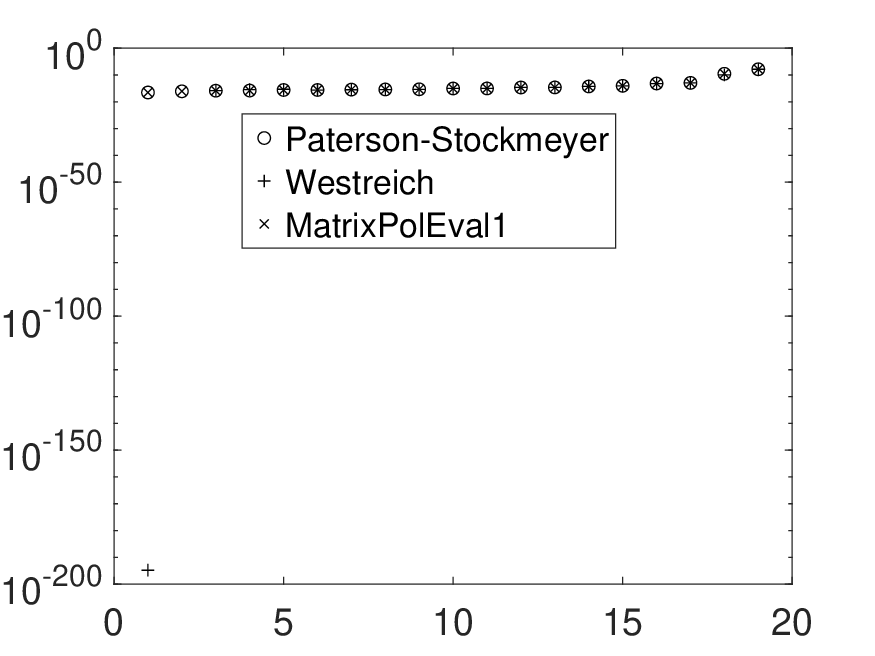}\\
        (\textbf{a}) MCT. & (\textbf{b}) EMP.\\
    \end{tabular}
    \caption{Relative error comparison for the evaluation of $\Psi(9,A)$. Plots show sorted 1-norm relative errors in IEEE double precision for matrix dimensions $n=100$ (top), $n=500$ (middle), and $n=1000$ (bottom). The methods compared are the Paterson--Stockmeyer (PS) formula \eqref{PPS}, Westreich's approach \eqref{Psi9Wes} \cite{89Wes}, and the optimized evaluation formula obtained via \texttt{MatrixPolEval1}. Left-hand plots correspond to MCT test matrices, while right-hand plots correspond to EMP
    matrices.}
  \label{fig_m8}

\end{figure}

\begin{figure}[H]
    \begin{tabular}{cccccc}
        \includegraphics[width=6cm]{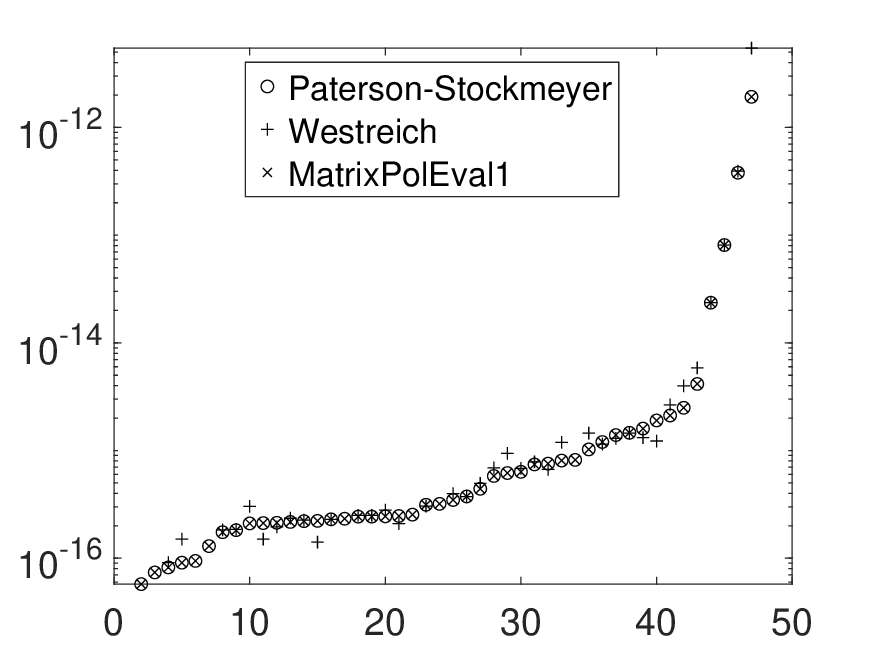}
        &\includegraphics[width=6cm]{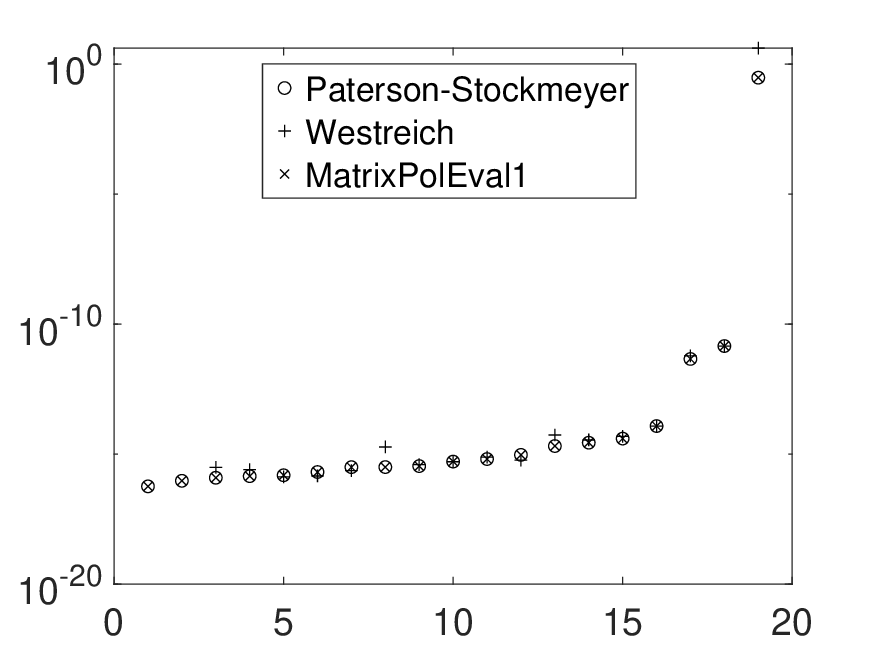}\\
        \includegraphics[width=6cm]{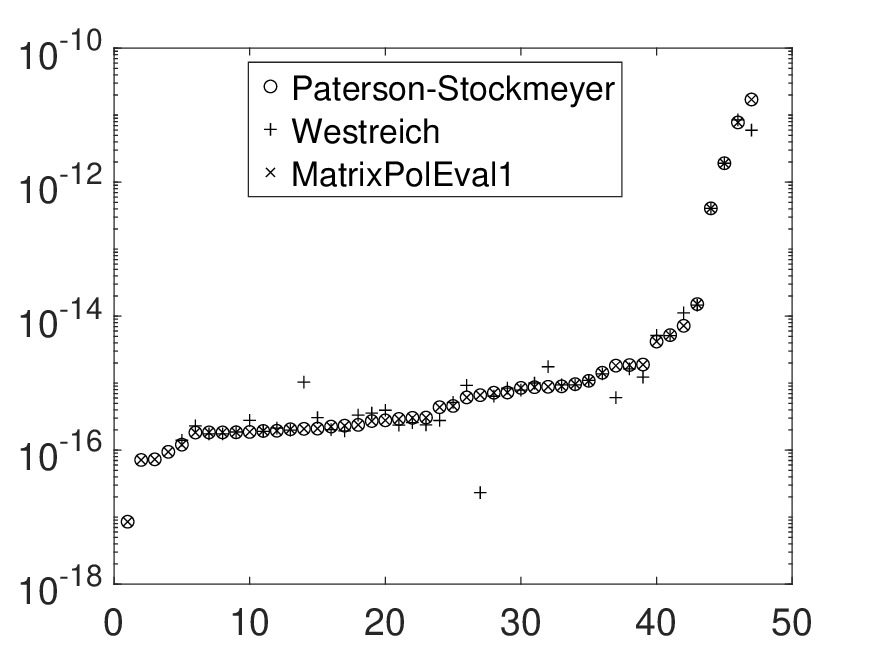}
        &\includegraphics[width=6cm]{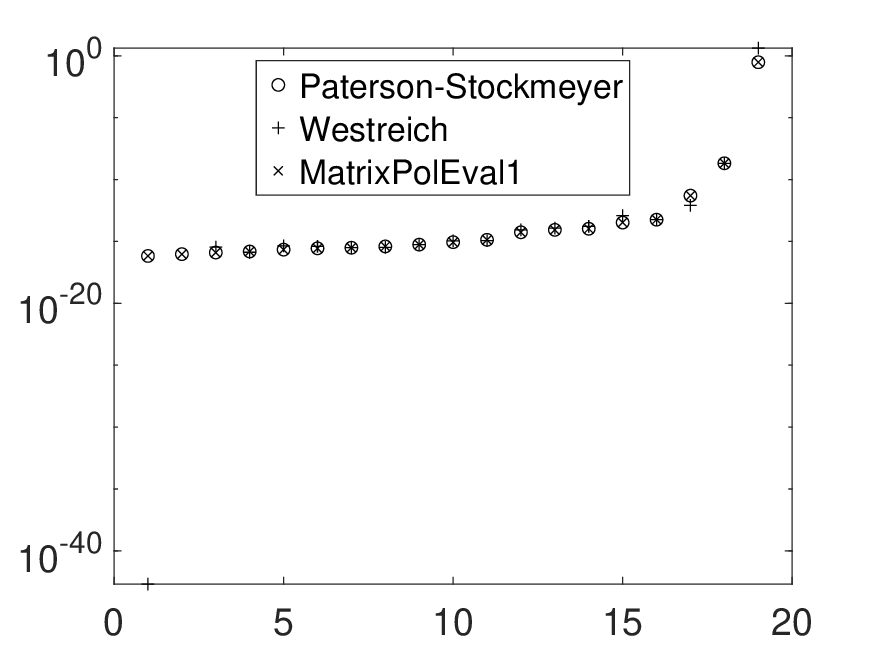}\\
        \includegraphics[width=6cm]{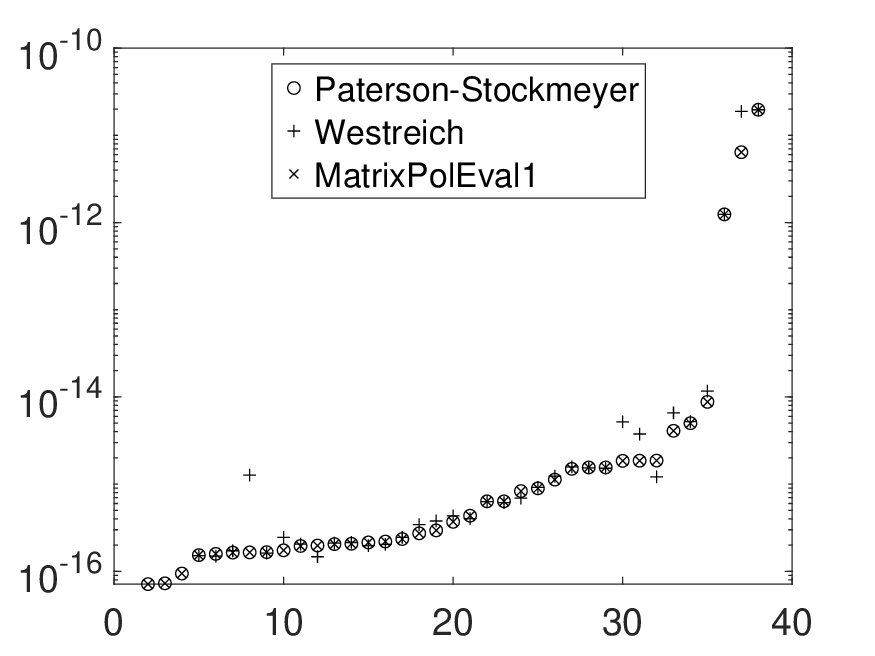}
        &\includegraphics[width=6cm]{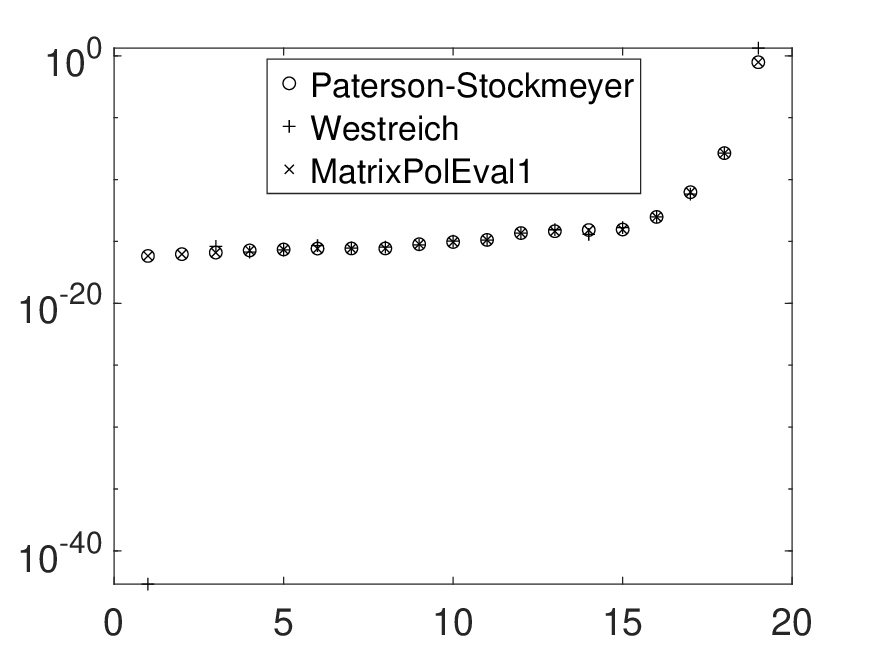}\\
        (\textbf{a}) MCT. & (\textbf{b}) EMP.\\
    \end{tabular}
    \caption{Relative error comparison for the evaluation of $\Psi(13,A)$. Plots show sorted 1-norm relative errors in IEEE double precision for matrix dimensions $n=100$ (top), $n=500$ (middle), and $n=1000$ (bottom). The methods compared are the PS formula \eqref{PPS}, Westreich's approach \eqref{Psi13Wes} \cite{89Wes}, and the optimized evaluation formula obtained via \texttt{MatrixPolEval1}. Left-hand plots correspond to MCT test matrices, while right-hand plots correspond to EMP
    matrices.}
    \label{fig_m12}
\end{figure}

\begin{figure}[H]
    \begin{tabular}{cccccc}
        \includegraphics[width=6cm]{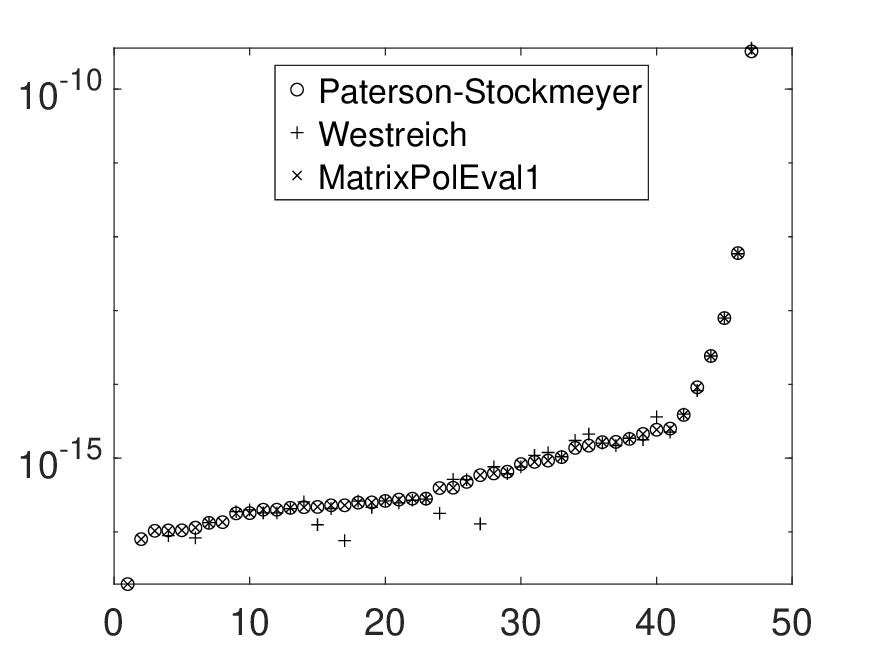}
        &\includegraphics[width=6cm]{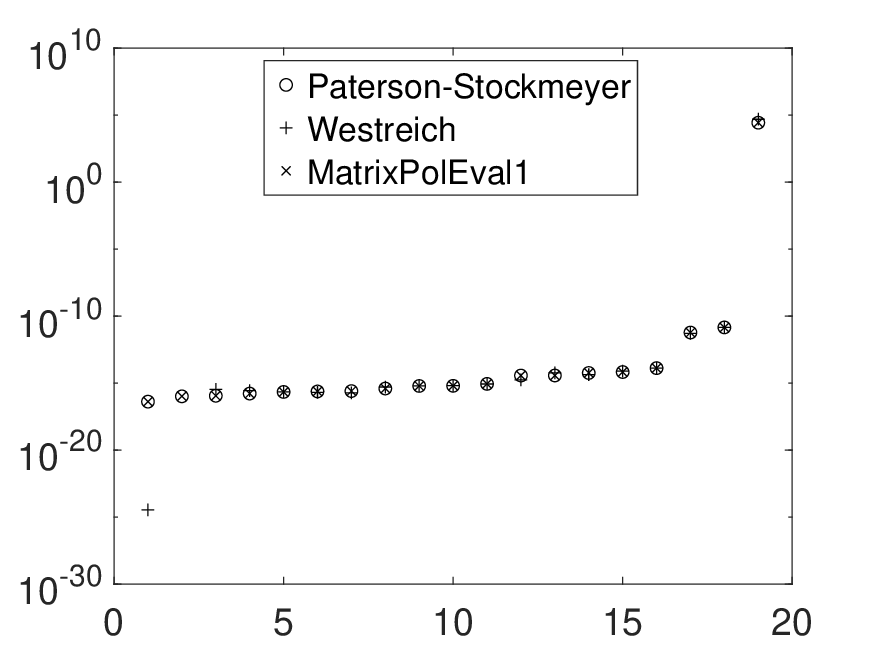}\\
        \includegraphics[width=6cm]{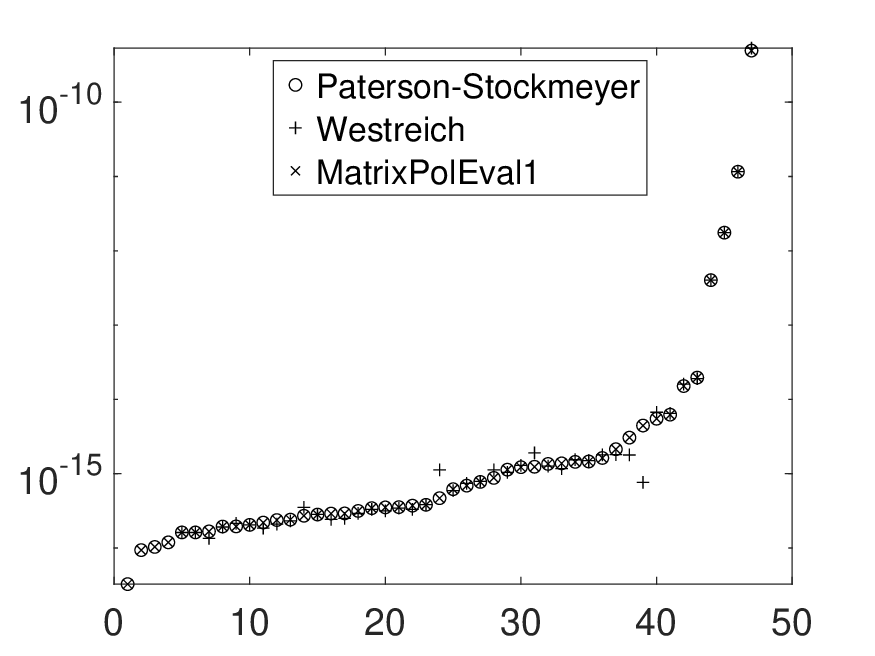}
        &\includegraphics[width=6cm]{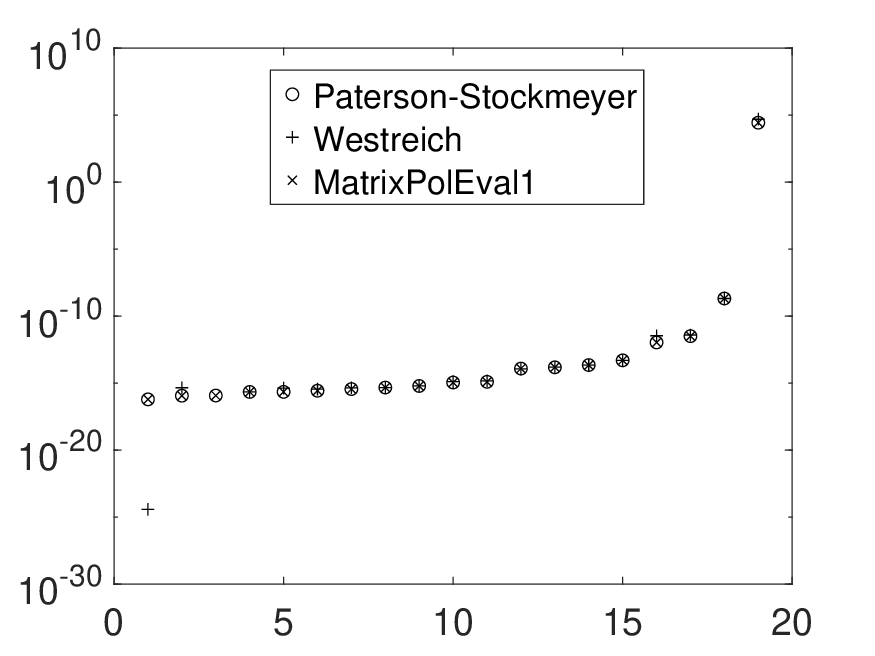}\\
        \includegraphics[width=6cm]{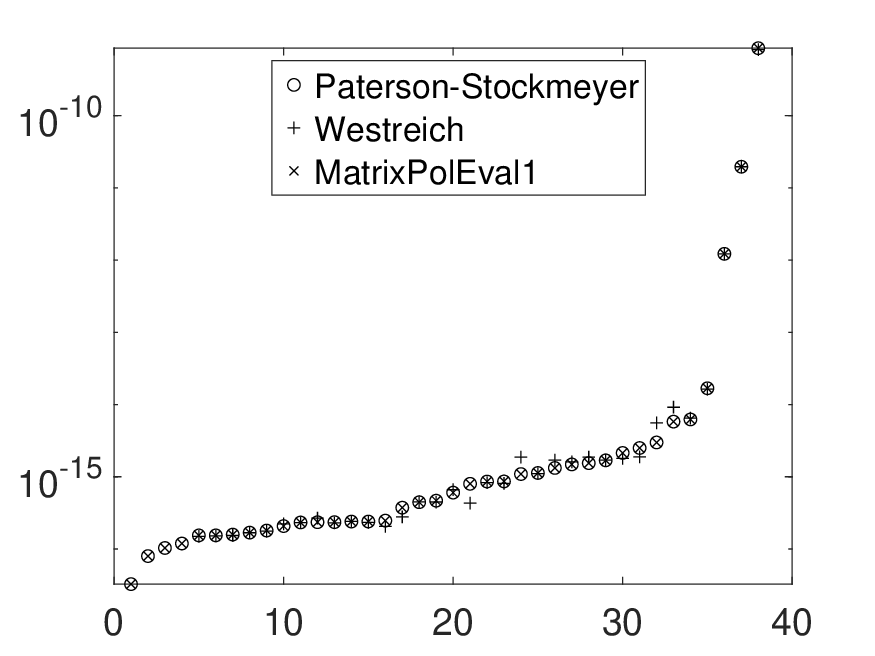}
        &\includegraphics[width=6cm]{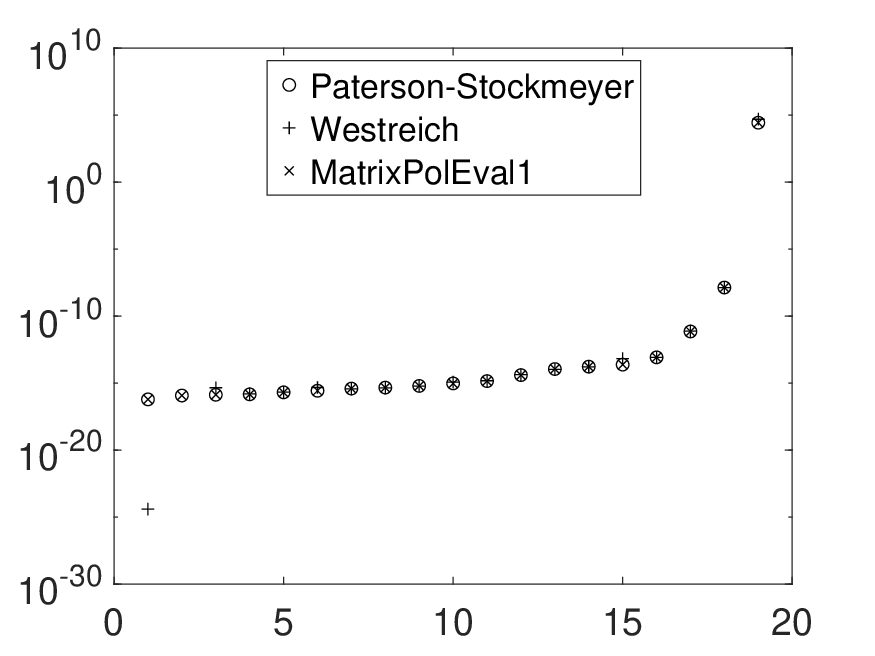}\\
        (\textbf{a}) MCT. & (\textbf{b}) EMP.\\
    \end{tabular}
    \caption{Relative error comparison for the evaluation of $\Psi(17,A)$. Plots show sorted 1-norm relative errors in IEEE double precision for matrix dimensions $n=100$ (top), $n=500$ (middle), and $n=1000$ (bottom). The methods compared are the PS formula \eqref{PPS}, Westreich's approach \eqref{Psi17Wes} \cite{89Wes}, and the optimized evaluation formula obtained via \texttt{MatrixPolEval1}. Left-hand plots correspond to MCT test matrices, while right-hand plots correspond to EMP
    matrices.}
    \label{fig_m16}
\end{figure}

\section{Conclusions}\label{sec:conclusions}

This work has introduced a framework and the corresponding MATLAB
tool, \texttt{MatrixPolEval1}, designed to optimize the evaluation
of matrix polynomials. The proposed schemes consistently achieve a
reduction in computational cost by saving one matrix product ($1M$)
compared to the widely used PS method. This methodology is general
and can be applied to evaluate matrix polynomials of degrees 8, 10,
and $\geq 12$ with non-zero leading coefficients, providing a more
efficient alternative to traditional techniques across various
mathematical and engineering applications.

One of the significant advantages of the proposed method is its
ability to identify stable sets of coefficients from among all
potential solution sets. The tool incorporates a mechanism to warn
the user if no stable solution sets are found, and its parameter
flexibility allows for the discovery of stable solutions in cases
where other configurations may yield unstable results.

For specific applications such as the finite geometric series
$\Psi(N,A)$, the proposed method provides a competitive alternative
to classical specialized algorithms, such as the approach by
Westreich, while preserving the required numerical precision.
Extensive testing using the Matrix Computation Toolbox (MCT) and the
Eigtool MATLAB Package (EMP) confirms that the stability check
parameter, \texttt{er\_min}, is a reliable predictor of the actual
numerical performance. This behavior is consistent with previous
results, where the same stability criterion was successfully applied
to the evaluation of the matrix exponential, the matrix cosine, and
other related matrix functions \cite{SID19, SIAPD19}. Beyond these
specific cases, the tool is intended as a general-purpose utility
for the stable evaluation of arbitrary matrix polynomials with
reduced computational cost. Future work will focus on providing new
tools for matrix polynomial evaluation capable of achieving greater
computational savings.

\section*{CRediT authorship contribution statement}

\textbf{Jose Miguel Alonso:} Software, Investigation, Validation,
Data curation, Writing - review \& editing.

\textbf{Jorge Sastre:} Conceptualization, Methodology, Software,
Formal analysis, Investigation, Resources, Writing - original draft,
Writing - review \& editing, Supervision, Project administration,
Funding acquisition.

\textbf{Javier Ib\'a\~nez:} Investigation, Validation, Formal
analysis, Writing - review \& editing.

\textbf{Emilio Defez:} Formal analysis, Validation, Writing - review
\& editing.

\section*{Declaration of Generative AI and AI-assisted technologies in the writing process}

During the preparation of this work the author(s) used Gemini in
order to improve the language, flow, and technical clarity of the
manuscript. After using this tool, the authors reviewed and edited
the content as needed and take full responsibility for the technical
accuracy and the final content of the publication.

\section*{Declaration of competing interest}

The authors declare that they have no known competing financial
interests or personal relationships that could have appeared to
influence the work reported in this paper.

\section*{Acknowledgements}\label{Ack}
This work has been supported by the Generalitat Valenciana Grant \\
CIAICO/2023/270.


\end{document}